\newtheorem{thm}{Theorem}
\newtheorem{lem}[thm]{Lemma}
\theoremstyle{definition}
\providecommand{\RR}{\mathbb{R}}
\providecommand{\NN}{\mathbb{N}}
\providecommand{\eps}{\epsilon}
\def\eps{\varepsilon}
\begin{document}

\title[Positivstellens\" atze for matrix polynomials]{Strict Positivstellens\" atze for matrix polynomials with scalar constraints}

\author{J. Cimpri\v c}

\keywords{positive polynomials, matrix polynomials, real algebraic geometry}

\subjclass[2010]{14P, 13J30, 47A56}

\date{submitted August 1st 2010, revised October 31st 2010}

\address{Jaka Cimpri\v c, University of Ljubljana, Faculty of Math. and Phys.,
Dept. of Math., Jadranska 19, SI-1000 Ljubljana, Slovenija. 
E-mail: cimpric@fmf.uni-lj.si. www page: http://www.fmf.uni-lj.si/ $\!\!\sim$cimpric.}

\begin{abstract}
We extend Krivine's strict positivstellensatz for usual (real multivariate) polynomials 
to symmetric matrix polynomials with scalar constraints.
The proof is an elementary computation with Schur complements. Analogous extensions 
of Schm\" udgen's and Putinar's strict positivstellensatz were recently proved by
Hol and Scherer using methods from optimization theory. 
\end{abstract}

\maketitle

\thispagestyle{empty}

\section{Introduction}

Let $S=\{g_1,\ldots,g_m\}$ be a finite subset of the algebra $\RR[\underline{X}]=\RR[X_1,\ldots,X_d]$. Write
$$K_S = \{x \in \RR^d \mid g_1(x) \ge 0, \ldots, g_m(x) \ge 0\}$$
and 
$$M_S = \{c_0+\sum_{i=1}^m c_i g_i \mid c_0,\ldots,c_m \in \sum \RR[\underline{X}]^2\}.$$
Write also $\widehat{S}=\{g_1^{\alpha_1} \cdots g_m^{\alpha_m} \mid \alpha_1,\ldots,\alpha_m \in \{0,1\}\}$ 
and $T_S=M_{\widehat{S}}$. 

The following theorem summarizes the strict positivstellens\" atze
of Krivine \cite{kr}, \cite{st}, Schm\" udgen \cite{sch}, \cite{bw}
and Putinar \cite{pu}, \cite{jac} (respectively, (1) $\Leftrightarrow$ (2), (1) $\Leftrightarrow$ (2') and (1) $\Leftrightarrow$ (2'')). 
A nice overview is \cite{mm}.

\begin{thm}
\label{ctm}
Notation as above. For every $f \in \RR[\underline{X}]$ the following are equivalent:
\begin{enumerate}
\item $f(x) > 0$ for every $x \in K_S$,
\item there exist $t,u \in T_S$ such that $(1+t)f=1+u$.
\end{enumerate}
If $K_S$ is compact then (1) and (2) are equivalent to
\begin{enumerate}
\item[(2')] there exists an $\eps >0$ such that $f-\eps \in T_S$.
\end{enumerate}
If one of the sets $K_{\{g_i\}} = \{x \in \RR^d \mid g_i(x) \ge 0\}$, $i=1,\ldots,m$, is compact,
then (1), (2) and (2') are equivalent to
\begin{enumerate}
\item[(2'')] there exists an $\eps >0$ such that $f-\eps \in M_S$.
\end{enumerate}
\end{thm}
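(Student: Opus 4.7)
The plan is to treat the trivial directions $(2),(2'),(2'')\Rightarrow (1)$ separately from the three substantive forward implications, all of which can be derived from the abstract (Krivine--Stengle) Positivstellensatz, with compactness supplying the additional input needed for Schm\"udgen and Putinar.

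The converse directions are direct evaluations. Every product $g_1^{\alpha_1}\cdots g_m^{\alpha_m}$ is nonnegative on $K_S$, hence every element of $T_S$ and of $M_S$ is nonnegative at each $x \in K_S$. Thus the identity $(1+t(x))f(x)=1+u(x)$ gives $f(x)=(1+u(x))/(1+t(x))>0$, while $f-\eps \in T_S$ or $f-\eps\in M_S$ gives $f(x)\ge \eps > 0$ throughout $K_S$.

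For $(1)\Rightarrow (2)$ I would apply the abstract Positivstellensatz to the empty semialgebraic set $K_S \cap \{-f \ge 0\}$: emptiness is equivalent to $-1 \in T_{S \cup \{-f\}}$, i.e.\ to an identity $a - bf = -1$ with $a,b \in T_S$, which rearranges to $bf = 1+a$. A short manipulation then repackages this as $(1+t)f = 1+u$ for suitable $t,u \in T_S$. The bulk of the work is concentrated in the abstract Positivstellensatz itself, which is proved by extending partial orderings on $\RR(X_1,\ldots,X_d)$ via Zorn's lemma to total orderings whose real closures realize points of $K_S$, followed by Tarski transfer; this is the principal obstacle.

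For $(1)\Rightarrow (2')$ and $(1)\Rightarrow (2'')$ the extra ingredient is the Archimedean property. Compactness of $K_S$ implies that $T_S$ is Archimedean (W\"ormann's theorem), and compactness of a single $K_{\{g_i\}}$ already forces $M_S$ to be Archimedean by a direct scaling argument using that a univariate polynomial bounded below on a compact interval can be represented with the relevant generator. With this in hand, Hahn--Banach separation of the translated cone $T_S - (f-\eps)$ (respectively $M_S - (f-\eps)$) from $-1$, coupled with the identification of characters of an Archimedean quadratic module with point evaluations at $K_S$, delivers an $\eps > 0$ with $f - \eps \in T_S$, respectively $f - \eps \in M_S$, completing the chain of equivalences.
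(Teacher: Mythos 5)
Your outline is essentially correct, but it is worth noting that the paper does not prove Theorem~\ref{ctm} at all: the theorem is presented as a summary of the classical strict Positivstellens\"atze, with the three equivalences attributed respectively to Krivine--Stengle, to Schm\"udgen together with Berr--W\"ormann, and to Putinar together with Jacobi. The only argument the paper itself supplies is the remark immediately after the statement, namely that the usual form $tf=1+u$ of Krivine's conclusion is converted into form (2) by the identity $(1+t+u)f=tf+(1+u)f=1+u+tf^2$. Your sketch reconstructs the standard proofs behind those citations and identifies the right ingredients: the evaluation argument for $(2),(2'),(2'')\Rightarrow(1)$ (legitimate since $1+t(x)\ge 1$ on $K_S$); the reduction of $(1)\Rightarrow(2)$ to the emptiness of $K_{S\cup\{-f\}}$ via the abstract Positivstellensatz, which yields $bf=1+a$ with $a,b\in T_S$ --- and your ``short manipulation'' is then precisely the identity the paper records, with $t=a+b$ and $u=a+bf^2$; and the Archimedean machinery for $(2')$ and $(2'')$. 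One point deserves sharpening: the Archimedeanness of $M_S$ under compactness of a single $K_{\{g_i\}}$ is not really ``a direct scaling argument'' but the observation that for a single generator the preordering and the quadratic module coincide, $T_{\{g_i\}}=M_{\{g_i\}}$, so W\"ormann's theorem applied to the one-constraint set already makes the submodule $M_{\{g_i\}}\subseteq M_S$ Archimedean, after which the Jacobi--Putinar representation theorem applies. Compared with the paper's bare citation, your route has the merit of reducing all three equivalences to two clearly isolated black boxes --- the Krivine--Stengle Positivstellensatz (via orderings, real closures and Tarski transfer) and the Archimedean representation theorem --- but it remains a sketch: the substantive content of both black boxes is asserted rather than proved, exactly as in the paper.
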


Usually one has $tf=1+u$ in (2). Our version is then a consequence of
$(1+t+u)f=tf+(1+u)f=1+u+tf^2$, see \cite[p. 26]{mm}.

Let $M_n(\RR[\underline{X}])$ be the algebra of all $n \times n$ matrices with entries from the algebra $\RR[\underline{X}]$.
Write $\sum M_n(\RR[\underline{X}])^2$ for the set of all finite sums of elements of the form $A^TA$ where $A \in M_n(\RR[\underline{X}])$.
For $S$  as above write
$$M_S^n = \{C_0+\sum_{i=1}^m C_i g_i \mid C_0,\ldots,C_m \in \sum M_n(\RR[\underline{X}])^2\}$$
and $T_S^n=M_{\widehat{S}}^n$. Clearly $M_S^n$ is a quadratic module (i.e. $M_S^n$ contains the identity matrix $I_n$, $M_S^n+M_S^n \subseteq M_S^n$ and
$A^T M_S^n A \subseteq M_S^n$ for every $A \in M_n(\RR[\underline{X}])$). The quadratic module $T_S^n$ 
also satisfies $T_S \cdot T_S^n \subseteq T_S^n$.

The aim of this note is to prove the equivalence (1) $\Leftrightarrow$ (2) in the following theorem. The equivalences 
(1) $\Leftrightarrow$ (2') and (1) $\Leftrightarrow$ (2'') just rephrase the Hol-Scherer theorem \cite[Theorem 2]{hs}
and are stated here for the sake of completeness.

\begin{thm} 
\label{mst}
Notation as above.
For every element $F \in M_n(\RR[\underline{X}])$ such that $F^T=F$, the following are equivalent:
\begin{enumerate}
\item $F(x)$ is strictly positive definite for every $x \in K_S$,
\item there exist $t\in T_S$ and $V \in T_S^n$ such that $(1+t)F=I_n+V$.
\end{enumerate}
If $K_S$ is compact then (1) and (2) are equivalent to
\begin{enumerate}
\item[(2')] there exists an $\eps >0$ such that $F-\eps I_n \in T_S^n$.
\end{enumerate}
If one of the sets $K_{\{g_i\}} = \{x \in \RR^d \mid g_i(x) \ge 0\}$, $i=1,\ldots,m$, is compact,
then (1), (2) and (2') are equivalent to
\begin{enumerate}
\item[(2'')] there exists an $\eps >0$ such that $F-\eps I_n \in M_S^n$.
\end{enumerate}
\end{thm}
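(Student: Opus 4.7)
The plan is induction on $n$, with base case $n = 1$ being Krivine's theorem (Theorem~\ref{ctm}) and $(2) \Rightarrow (1)$ immediate by evaluation: at $x \in K_S$ one has $1 + t(x) > 0$ and $I_n + V(x) \succeq I_n \succ 0$, so $F(x) = (1+t(x))^{-1}(I_n + V(x))$ is strictly positive definite.

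For the inductive step I would split $F$ in two block forms, exposing the first and the last diagonal entries,
$$F = \begin{pmatrix} a & D^T \\ D & E \end{pmatrix} = \begin{pmatrix} A & B \\ B^T & c \end{pmatrix},$$
and write down the Schur-complement identities
$$cF = \begin{pmatrix} cA - BB^T & 0 \\ 0 & 0 \end{pmatrix} + \begin{pmatrix} B \\ c \end{pmatrix}\begin{pmatrix} B^T & c \end{pmatrix}, \qquad aF = \begin{pmatrix} 0 & 0 \\ 0 & aE - DD^T \end{pmatrix} + \begin{pmatrix} a \\ D \end{pmatrix}\begin{pmatrix} a & D^T \end{pmatrix},$$
which are verified by direct multiplication. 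The $(n-1)\times(n-1)$ blocks $cA - BB^T$ and $aE - DD^T$ are positive definite on $K_S$ (being scaled Schur complements of $F$, with $a,c > 0$), so the inductive hypothesis supplies $t_1, t_2 \in T_S$ and $V_1, V_2 \in T_S^{n-1}$ with $(1+t_1)(cA - BB^T) = I_{n-1} + V_1$ and $(1+t_2)(aE - DD^T) = I_{n-1} + V_2$.

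Set $1 + t' = (1+t_1)(1+t_2)$, which lies in $1 + T_S$ because $T_S$ is closed under multiplication. Using that $T_S \cdot T_S^n \subseteq T_S^n$, that $sI_n \in T_S^n$ for $s \in T_S$, and that $\diag(C, 0), \diag(0, C) \in T_S^n$ whenever $C \in T_S^{n-1}$, each of the two identities becomes, after multiplication by $1 + t'$,
$$(1+t')\,cF = \diag(I_{n-1}, 0) + W_c, \qquad (1+t')\,aF = \diag(0, I_{n-1}) + W_a,$$
with $W_c, W_a \in T_S^n$. Adding them, and noting that $\diag(0, 1, \ldots, 1, 0)$ is a constant positive semidefinite matrix (hence a sum of squares, hence in $T_S^n$), yields
$$(1+t')(a+c)\,F = I_n + W$$
for some $W \in T_S^n$.

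To convert the multiplier $(1+t')(a+c)$ into an element of $1 + T_S$, apply scalar Krivine to $a+c$, which is positive on $K_S$ as the sum of two diagonal entries of a positive-definite matrix; this yields $s, u \in T_S$ with $(1+s)(a+c) = 1+u$. Multiplying the preceding identity by $1+s$ produces $(1+t')(1+u)\,F = I_n + (sI_n + (1+s)W)$, i.e., $(1+T)F = I_n + V$ with $T = t' + u + t'u \in T_S$ and $V = sI_n + (1+s)W \in T_S^n$, which is the required form. The only non-routine idea is the combination of two Schur decompositions in paragraph three: a single decomposition fills an $(n-1)\times(n-1)$ corner with the identity but leaves a zero on the complementary diagonal entry, so one must run the decomposition symmetrically at two distinct diagonal positions and add in order to dominate the whole identity matrix.
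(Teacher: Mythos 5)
Your argument is correct, and it takes a genuinely different route from the paper's. The paper performs a single Schur decomposition at the $(1,1)$ entry, written as a congruence $P^T\diag(f_{11},\tilde H)P$ with $P$ carrying the off-diagonal block $-f_{11}^{-1}g$; after clearing denominators with $f_{11}^3$, applying the induction hypothesis to $f_{11}\tilde H$ and scalar Krivine to $f_{11}$, it is left with a cross term $\tilde g=(1+s_1)g$ off the diagonal, and absorbing it requires a separate technical lemma (Lemma~\ref{lmst}: for every $B$ there is $c\in\sum\RR[\underline{X}]^2$ with $cI_n-B^TB\in\sum M_n(\RR[\underline{X}])^2$) plus a completion-of-squares computation. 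Your version sidesteps all of this: writing the Schur identity additively as $cF=\diag(cA-BB^T,0)+vv^T$ involves no denominators and no congruence matrix, hence no cross term to dispose of; the price is that one such identity only yields $\diag(I_{n-1},0)$ rather than $I_n$ on the right, and your fix --- running the decomposition at the two complementary diagonal positions, adding, and then normalizing the multiplier $(1+t')(f_{11}+f_{nn})$ by one more application of scalar Krivine to $f_{11}+f_{nn}$, which is indeed strictly positive on $K_S$ --- is exactly the right one. The cost is two inductive calls per level instead of one (irrelevant for a pure existence statement); the gain is that Lemma~\ref{lmst} and the $f_{11}^3$, $(1+s_1)^4$ bookkeeping disappear entirely. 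The membership claims you rely on ($vv^T\in\sum M_n(\RR[\underline{X}])^2$ for a polynomial column $v$, $\diag(C,0)\in T_S^n$ for $C\in T_S^{n-1}$, $sI_n\in T_S^n$ for $s\in T_S$, and $T_S\cdot T_S^n\subseteq T_S^n$) are all routine and correct, and the two block identities check out by direct multiplication, so the proof is complete; like the paper, you may simply defer (2$'$) and (2$''$) to the Hol--Scherer theorem.
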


\section{The proof}

We will need the following technical lemma:

\begin{lem}
\label{lmst}
For every $B \in M_n(\RR[\underline{X}])$ there exists $c \in \sum \RR[\underline{X}]^2$ such that
$c I_n - B^T B \in \sum M_n(\RR[\underline{X}])^2$.
\end{lem}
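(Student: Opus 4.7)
The plan is to reduce the matrix $B$ to its rows and handle each one separately. Writing $B$ in terms of its rows $v_1^T,\dots,v_n^T$ (with each $v_i$ a column vector in $\RR[\underline{X}]^n$), we have $B^TB=\sum_{i=1}^n v_i v_i^T$. So if we can show, for each column vector $v\in\RR[\underline{X}]^n$, that $(v^Tv)I_n-vv^T\in\sum M_n(\RR[\underline{X}])^2$, then taking $c=\sum_i v_i^Tv_i=\sum_{i,k}b_{ik}^2$, which is visibly in $\sum\RR[\underline{X}]^2$, gives the desired decomposition by summing.

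To handle a single column vector $v=(v_1,\dots,v_n)^T$, I would write down the Lagrange-type identity
\[
(v^Tv)I_n - vv^T = \sum_{1\le i<j\le n} (v_i e_j - v_j e_i)(v_i e_j - v_j e_i)^T,
\]
and verify it by comparing entries: on the diagonal the right-hand side contributes $\sum_{k\neq \ell}v_k^2$ in position $(\ell,\ell)$, while off the diagonal the only surviving term at $(k,\ell)$ with $k<\ell$ is $-v_kv_\ell$, which matches $(v^Tv)I_n-vv^T$ in both cases.

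Each rank-one summand $w w^T$ with $w\in\RR[\underline{X}]^n$ a column vector belongs to $\sum M_n(\RR[\underline{X}])^2$: let $C$ be the $n\times n$ matrix whose first row is $w^T$ and whose remaining rows are zero, so that $C^TC=ww^T$. Applying this to each $w=v_ie_j-v_je_i$ gives the single-vector case, and then summing over the rows of $B$ completes the argument.

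The main obstacle is essentially bookkeeping: one must be careful that $AA^T$ for a polynomial column $A$ really does fit the prescribed form $C^TC$ with $C\in M_n(\RR[\underline{X}])$, and that the Lagrange identity is applied correctly so that its verification can be done in one line of entrywise comparison. No further machinery from the introduction seems necessary; the lemma is self-contained.
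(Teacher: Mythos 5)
Your proof is correct, and it takes a genuinely different route from the paper. The paper argues structurally: it fixes $p=1+\sum_{i=1}^d X_i^2$, defines $L$ to be the set of $B$ for which $kp^lI_n-B^TB\in\sum M_n(\RR[\underline{X}])^2$ for some positive integers $k,l$, checks that $L$ contains the variables and the constant matrices, and shows $L$ is closed under addition (via the parallelogram identity) and multiplication (by inserting $\pm k_2p^{l_2}B_1^TB_1$), so that $L=M_n(\RR[\underline{X}])$; the resulting $c=kp^l$ is not explicit in terms of the entries of $B$ and can be far from optimal. You instead produce an explicit certificate in one step: writing $B^TB=\sum_i v_iv_i^T$ over the rows of $B$ and invoking the Lagrange identity $(v^Tv)I_n-vv^T=\sum_{i<j}(v_ie_j-v_je_i)(v_ie_j-v_je_i)^T$, you get the decomposition with the canonical choice $c=\tr(B^TB)=\sum_{i,k}b_{ik}^2$, each rank-one summand $ww^T$ being realized as $C^TC$ with $C=e_1w^T$. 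I checked the entrywise verification of the identity and the reduction to rows; both are correct. Your argument is shorter, more elementary, and yields a sharper and fully explicit $c$; the paper's version has the minor advantage of producing $c$ from the one-parameter family $kp^l$, which can be convenient if one later wants $c$ to be a power of a fixed archimedean witness, but nothing in the application of the lemma in Theorem 2 requires that.
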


\begin{proof} 
We can take $c$ to be of the form $k p^l$ where $p=1+\sum_{i=1}^d X_i^2$ and $k,l$ are positive integers.
Namely, let $L$ be the set of all $B \in M_n(\RR[\underline{X}])$ such that 
$k p^l I_n - B^T B \in \sum M_n(\RR[\underline{X}])^2$ for some positive integers $k$ and $l$.
Clearly, $L$ contains $X_1,\ldots,X_d$ and all constant matrices. To prove that $L=M_n(\RR[\underline{X}])$
it suffices to show that $L$ is closed for addition and multiplication. Suppose that $B_1,B_2 \in L$.
There exist positive integers $k_1,k_2,l_1,l_2$ such that 
$$k_1 p^{l_1}I_n-B_1^T B_1\in \sum M_n(\RR[\underline{X}])^2, \quad k_2 p^{l_2}I_n-B_2^T B_2\in \sum M_n(\RR[\underline{X}])^2. $$
The paralelogram identity
implies that
$$2(k_1+k_2)p^{\max(l_1,l_2)}I_n-(B_1+B_2)^T(B_1+B_2) \in \sum M_n(\RR[\underline{X}])^2 $$
and by inserting terms $\pm k_2 p^{l_2} B_1^T B_1$ we see that
$$k_1 k_2 p^{l_1+l_2}I_n - (B_1 B_2)^T(B_1 B_2)^T\in \sum M_n(\RR[\underline{X}])^2.$$
\end{proof}

We can now return to the proof of Theorem \ref{mst}.

\begin{proof}

Clearly, (2'') $\Rightarrow$ (2') $\Rightarrow$ (2) $\Rightarrow$ (1) (with no assumptions on $K_S$).

The implications (2') $\Rightarrow$ (1) when $K_S$ is compact and (2'') $\Rightarrow$ (1)
when one of $K_{\{g_i\}}$ is compact follow from Theorem \ref{ctm} and Hol-Scherer theorem
\cite[Theorem 2]{hs}.

We will now prove that (2) implies (1) (with no assumptions on $K_S$) by induction on $n$. 
The case $n=1$ is covered by Theorem \ref{ctm}. Suppose that
(1) implies (2) for all symmetric matrix polynomials of size $n-1$
and pick a symmetric polynomial $F(x)$ of size $n$ which satisfies (1).
We write 
$$F= 
\left[ \begin{array}{cc} f_{11} & g \\ g^T & H  \end{array} \right]
$$
and observe that (in $M_n(\RR(X))$)
\begin{equation}
\left[ \begin{array}{cc} 1 & -\frac{1}{f_{11}}g \\ 0 & I_{n-1} \end{array} \right]^T 
\left[ \begin{array}{cc} f_{11} & g \\ g^T & H  \end{array} \right]
\left[ \begin{array}{cc} 1 & -\frac{1}{f_{11}}g \\ 0 & I_{n-1} \end{array} \right] =
\left[ \begin{array}{cc} f_{11} & 0 \\ 0 & \tilde{H} \end{array} \right]
\end{equation}
where $\tilde{H} = H-\frac{1}{f_{11}}g^T g$ is the Schur complement of $f_{11}$. 
Since $F$ is stricly positive definite on $K_S$, it follows that $f_{11}$
is stricly positive on $K_S$, hence 
$\left[ \begin{array}{cc} 1 & -\frac{1}{f_{11}}g \\ 0 & I_{n-1} \end{array} \right]$
is defined and invertible on $K_S$. It follows that 
$\tilde{H}$ is defined and strictly positive definite on $K_S$.
Clearly, $f_{11} \tilde{H}$ is a matrix polynomial that is
stricly positive definite on $K_S$. By the induction hypothesis there
exist $s \in T$ and $U \in T_S^{n-1}$ such that 
\begin{equation}
(1+s)f_{11} \tilde{H} = I_{n-1}+U.
\end{equation}
On the other hand, there exists by $n=1$ elements $s_1,u_1 \in T$ such that 
\begin{equation}
(1+s_1)f_{11} =1+u_1.
\end{equation}
Rearrange equation (1) and multiply it by $f_{11}^3$ to get (with $I=I_{n-1}$)
\begin{equation}
f_{11}^3 \left[ \begin{array}{cc} f_{11} & g \\ g^T & H  \end{array} \right] =
\left[ \begin{array}{cc} f_{11} & g \\ 0 & f_{11}I \end{array} \right]^T 
\left[ \begin{array}{cc} f_{11}^2 & 0 \\ 0 & f_{11}\tilde{H} \end{array} \right]
\left[ \begin{array}{cc} f_{11} & g \\ 0 & f_{11}I \end{array} \right]
\end{equation}
Multiplying equation (4) by $(1+s)(1+s_1)^4$ and using equations (2) and (3), we get:
\begin{eqnarray*}
(1+s)(1+s_1)(1+u_1)^3 \left[ \begin{array}{cc} f_{11} & g \\ g^T & H  \end{array} \right] = 
\left[ \begin{array}{cc} 1+u_1 & (1+s_1)g \\ 0 & (1+u_1)I \end{array} \right]^T \cdot \\ \cdot
\left[ \begin{array}{cc} (1+s)(1+u_1)^2 & 0 \\ 0 & (1+s_1)^2 (I+U) \end{array} \right] 
\left[ \begin{array}{cc} 1+u_1 & (1+s_1)g \\ 0 & (1+u_1)I \end{array} \right]
\end{eqnarray*}
Since 
\begin{equation*}
\left[ \begin{array}{cc} (1+s)(1+u_1)^2 & 0 \\ 0 & (1+s_1)^2 (I+U) \end{array} \right] =I_n+W
\end{equation*}
for some $W \in T^n_S$, it follows that
\begin{eqnarray*}
& (1+s)(1+s_1)(1+u_1)^3 \left[ \begin{array}{cc} f_{11} & g \\ g^T & H  \end{array} \right] = & \\ 
& =\left[ \begin{array}{cc} 1+u_1 & (1+s_1)g \\ 0 & (1+u_1)I \end{array} \right]^T 
\left[ \begin{array}{cc} 1+u_1 & (1+s_1)g \\ 0 & (1+u_1)I \end{array} \right]+W' &
\end{eqnarray*}
for some $W' \in T^n_S$. Write $\tilde{g}= (1+s_1)g$. 
By Lemma \ref{lmst} 
there exists an element $c \in \sum \RR[\underline{X}]^2$ 
such that 
\begin{equation}
c I_{n-1}-\tilde{g}^T \tilde{g} =: \sigma \in \sum M_{n-1}(\RR[\underline{X}])^2.
\end{equation}
Write $v=1+c$, $W''=v(1+v)W'$ and note that
\begin{eqnarray*}
& v(1+v)(1+s)(1+s_1)(1+u_1)^3 \left[ \begin{array}{cc} f_{11} & g \\ g^T & H  \end{array} \right] = &  
 \\ 
& = v(1+v)\left[ \begin{array}{cc} (1+u_1)^2 & (1+u_1)\tilde{g} \\
(1+u_1)\tilde{g}^T & \tilde{g}^T \tilde{g}+(1+u_1)^2 I \end{array} \right] +W'' = & 
\\ 
& =  \left[ \begin{array}{cc} v(1+u_1)^2 & 0 \\ 
0 & (v(1+u_1)^2 +v^2(2u_1+u_1^2)+1)I+(v+1)\sigma \end{array} \right] + & 
\\ 
& +
\left[ \begin{array}{cc} v (1+u_1) & (1+v)\tilde{g} \\ 0 & 0 \end{array} \right]^T
\left[ \begin{array}{cc} v (1+u_1) & (1+v)\tilde{g} \\ 0 & 0 \end{array} \right]
+W''  &
\end{eqnarray*}
which clearly belongs to $I_n+T_S^n$. It is also clear that $v(1+v)(1+s)(1+s_1)(1+u_1)^3$
belongs to $1+T_S$. 
\end{proof}

\section{Open problems}

\begin{enumerate}
\item Extend Theorem \ref{mst} to the case of matrix constraints. 

This problem is suggested in \cite{ks}. They extend Hol-Scherer theorem this way.

\item Extend Krivine's nichtnegativstellensatz ($f \ge 0$ on $K_S$ iff $ft=f^{2k}+u$ for some $t,u \in T_S$ and $k \in \NN$)
to matrix polynomials. 

A possible approach is given in Section 4.2 of \cite{sch2}. 
The matrix version of the Hilbert's 17th problem (i.e. the case $S=\emptyset$) was proved independently in \cite{gr} and \cite{ps}. 
For a constructive proof see Proposition 10 in \cite{sch2}.

\item Suppose that for some $S$ every $f \in \RR[\underline{X}]$ such that $f \ge 0$ on $K_S$ belongs to $T_S$. Does it follow that
every symmetric $F$ in $M_n(\RR[\underline{X}])$ which is positive semidefinite on $K_S$ belongs to $T_S^n$?

This is true in the following one-dimensional cases: $S=\emptyset$ by \cite{j}, \cite{d} or \cite{glr} and $S=\{X\}$, $S=\{X,1-X\}$ 
 by \cite{ds}.
\end{enumerate}

\end{document}